\documentclass{article}
\usepackage{amsmath,amssymb,amsthm,fullpage,url}

\theoremstyle{plain}
\newtheorem{proposition}{Proposition}
\newtheorem{lemma}{Lemma}

\title{A Note on Centralizers of Involutions in Coxeter Groups}
\author{Koji Nuida\\ Institute of Mathematics for Industry (IMI), Kyushu University\\ \url{nuida@imi.kyushu-u.ac.jp}}
\date{\vspace*{-2em}}
\begin{document}
\maketitle

\begin{abstract}
    In this note, we give a remark on the structure of centralizers of involutions in Coxeter groups. 
\end{abstract}

\section{Introduction}

Centralizers and normalizers of some subgroups are one of the major objects in studying structural properties of groups.
Here we use the following notation for centralizers
\[
    Z_G(H) := \{ g \in G \mid gh = hg \mbox{ for any } h \in H \} \enspace,
\]
and the following notation for normalizers
\[
    N_G(H) := \{ g \in G \mid gHg^{-1} \subseteq H \} \enspace.
\]
For the case of Coxeter groups, centralizers of reflections are described by Brink \cite{Brink96} and also studied by Allcock \cite{Allcock13}.
Centralizers of Coxeter elements are studied by Blokhina \cite{Blokhina89}, by Kaul and White \cite{KauWhi09}, and by Hollenbach and Wegener \cite{HolWeg22}.
Centralizers of general elements in finite Coxeter groups are studied by Konvalinka, Pfeiffer, and R\"{o}ver \cite{KPR11}.
Normalizers of parabolic subgroups are described by Howlett \cite{Howlett80} for finite Coxeter groups and by Brink and Howlett \cite{BriHow99} for a general case (see also \cite{Allcock12,Borcherds98}).
Centralizers of parabolic subgroups are described by Nuida \cite{Nuida11,Nuida13}.

In a recent preprint, Serre \cite{Serre22} described centralizers of involutions in finite Coxeter groups.
The aim of this note is to give another way of describing centralizers of involutions in Coxeter groups by using some known structural results on Coxeter groups, such as the classification of conjugacy classes of involutions in Coxeter groups \cite{Richardson82}.

\section{Results}

The definitions and properties for Coxeter groups used in this note without mentioning can be found in the book \cite{Humphreys}.
For a Coxeter system $(W,S)$, let $V$ be the standard geometric representation space spanned by the set of simple roots $\Pi = \{ \alpha_s \mid s \in S \}$.
Then the root system $\Phi = W \cdot \Pi$ is the disjoint union $\Phi = \Phi^+ \sqcup \Phi^-$ where $\Phi^+$ and $\Phi^-$ are the sets of positive and negative roots, respectively.
For a subset $I \subseteq S$, let $W_I := \langle I \rangle$ denote the standard parabolic subgroup of $W$ generated by $I$.
Then $(W_I,I)$ is also a Coxeter system whose standard geometric representation space is the subspace $V_I$ of $V$ spanned by the elements $\alpha_s$ with $s \in I$.
The set $\Phi_I := \Phi \cap V_I$ forms the root system of $(W_I,I)$ with the decomposition into $\Phi^+_I := \Phi_I \cap \Phi^+$ and $\Phi^-_I := \Phi_I \cap \Phi^-$.
For $w \in W$, let $\ell(w)$ denote the length of the shortest expression of $w$ as a product of elements of $S$.
If $I \subseteq S$ and $|W_I| < \infty$, then $W_I$ has the unique element of the maximum length, called the longest element of $(W_I,I)$ and denoted here by $\rho_I$.

We say that a Coxeter system $(W,S)$ is of \emph{$(-1)$-type} if $W$ is finite and the action of the longest element $\rho = \rho_S$ of $(W,S)$ satisfies that $\rho \cdot \alpha_s = -\alpha_s$ for any $s \in S$.
The following is a well-known result by Richardson \cite[Theorem A]{Richardson82} on the conjugacy classes for involutions in Coxeter groups.

\begin{proposition}
    \label{prop:Richardson}
    Let $(W,S)$ be a Coxeter system and $w$ be an element of $W$ with $w^2 = 1$.
    Then there exist a subset $I \subseteq S$ and an element $u \in W$ satisfying that $(W_I,I)$ is of $(-1)$-type and $uwu^{-1} = \rho_I$.
\end{proposition}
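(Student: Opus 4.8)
The plan is to argue by strong induction on the length $\ell(w)$, using only the exchange condition together with standard facts about parabolic subgroups and their longest elements. The base case $\ell(w)=0$ is immediate: then $w=1=\rho_\emptyset$ and $(W_\emptyset,\emptyset)$ is vacuously of $(-1)$-type. For the inductive step with $w\neq 1$, I would split into two cases according to whether $\ell(sws)<\ell(w)$ for some $s\in S$. In the first (``reducible'') case, if $\ell(sws)<\ell(w)$, then $sws$ is again an involution (using $w^2=1$) of strictly smaller length, so by the induction hypothesis there are $J\subseteq S$ and $u_0\in W$ with $(W_J,J)$ of $(-1)$-type and $u_0(sws)u_0^{-1}=\rho_J$; then $u:=u_0s$ and $I:=J$ work, since $uwu^{-1}=u_0(sws)u_0^{-1}=\rho_J$.

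The substantive case is when $\ell(sws)\ge\ell(w)$ for every $s\in S$. Here I would set $I:=\{s\in S\mid \ell(sw)<\ell(w)\}$, the left descent set of $w$; since $w^{-1}=w$, this also equals the right descent set, so every $s\in I$ is a two-sided descent of $w$. A standard lemma on Coxeter groups (provable from the exchange condition) states that if $s$ is a two-sided descent of $w$ then either $sw=ws$ or $\ell(sws)=\ell(w)-2$; the case hypothesis rules out the second alternative, so $w$ commutes with every $s\in I$, i.e.\ $w$ centralizes $W_I$. Then $w$ and $s$ commute as operators on $V$, so $w$ preserves the $(-1)$-eigenline $\mathbb{R}\alpha_s$ of $s$; since $s\in I$ forces $w\alpha_s\in\Phi^-$, we conclude $w\alpha_s=-\alpha_s$ for all $s\in I$, so $w$ acts as $-\mathrm{id}$ on $V_I$. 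In particular $w$ sends $\Phi_I^+$ into $\Phi^-$, so $\Phi_I^+$ lies in the inversion set of $w$, which has $\ell(w)<\infty$ elements; hence $\Phi_I^+$, and therefore $W_I$, is finite, so $\rho_I$ exists.

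It remains to show that $w=\rho_I$ in this case; granting that, $(W_I,I)$ is of $(-1)$-type by the previous step (as $\rho_I\alpha_s=w\alpha_s=-\alpha_s$), and we finish with $u=1$. Since every element of $I$ is a left descent of $w$ and $W_I$ is finite, I would invoke the parabolic decomposition to write $w=\rho_I v$ with $\ell(w)=\ell(\rho_I)+\ell(v)$, where $v$ is the minimal-length element of the coset $W_I w$ (equivalently $\ell(sv)>\ell(v)$ for all $s\in I$). Using that $w$ commutes with $\rho_I\in W_I$, one checks $v^2=1$. Suppose $v\neq 1$ and pick $t\in S$ with $\ell(tv)<\ell(v)$; then necessarily $t\notin I$. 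Now $v\alpha_t=v^{-1}\alpha_t\in\Phi^-$, and one checks $v\alpha_t\notin\Phi_I$ (using $v^{-1}(\Phi_I^+)\subseteq\Phi^+$ and $v=v^{-1}$); since $\rho_I$ permutes $\Phi^+\setminus\Phi_I$, hence also $\Phi^-\setminus\Phi_I$, we get $w\alpha_t=\rho_I(v\alpha_t)\in\Phi^-$, i.e.\ $t\in I$ --- a contradiction. Hence $v=1$ and $w=\rho_I$.

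The hard part is this second case, and within it the final step $w=\rho_I$, which requires care with minimal coset representatives and with the way the longest element $\rho_I$ acts on $\Phi^+\setminus\Phi_I$. The two-sided-descent lemma used to extract $sw=ws$ is a smaller, second technical point; it is well known, but since the whole irreducible case rests on it, I would state and prove it explicitly.
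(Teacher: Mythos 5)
Your proof is correct, and its skeleton coincides with the paper's: induct on $\ell(w)$, conjugate by a generator whenever that shortens $w$, and in the terminal case identify $w$ with the longest element of the standard parabolic subgroup generated by its descent set. (Note that your case split is equivalent to the paper's: your two-sided-descent lemma shows that the "irreducible" case $\ell(sws)\geq\ell(w)$ for all $s$ is exactly the case $J_w=K_w$ of the paper's Lemma~\ref{lem:when_involution_is_rho}, and your reducible case matches Lemma~\ref{lem:shorten_involution}.) Where you genuinely diverge is in the proof of the terminal case. The paper shows directly that the inversion set $\Phi[w]$ is contained in $\Phi_{K_w}$, by expanding a putative root $\gamma\in\Phi[w]\setminus\Phi_{K_w}$ in the simple basis and deriving a contradiction from positivity of coefficients; the identity $w=\rho_{K_w}$ then falls out of the coset decomposition $w=w^{K_w}w_{K_w}$. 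You instead first extract the commutation $sw=ws$ for each descent $s$ (via the descent lemma and the $(-1)$-eigenline of $s$), giving $w|_{V_I}=-\mathrm{id}$, and then kill the minimal coset representative $v$ in $w=\rho_I v$ using the fact that $\rho_I$ permutes $\Phi^-\setminus\Phi_I$. Both arguments are sound and of comparable length; the paper's route is slightly more self-contained (it needs only the expansion of roots in simple roots and basic facts about coset representatives), while yours leans on two additional standard facts — the two-sided-descent lemma and the stability of $\Phi^+\setminus\Phi_I^+$ under $\rho_I$ — which, as you rightly note, should be stated and proved explicitly if the argument is to stand alone. One payoff of your version is that it makes the commutation of $w$ with $W_I$ explicit, which connects naturally to Proposition~\ref{prop:centralizer_and_normalizer}.
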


The following result was proved by Felder and Veselov \cite[Proposition 7]{FelVes05} and by Pfeiffer and R\"{o}hrle \cite[Proposition 2.2]{PfeRoh05} for the case of finite Coxeter groups and by M\"{u}hlherr and Nuida \cite[Proposition 2.16 (i)]{MuhNui16} for a general case.

\begin{proposition}
    \label{prop:centralizer_and_normalizer}
    Let $(W,S)$ be a Coxeter system and $I$ be a subset of $S$ satisfying that $(W_I,I)$ is of $(-1)$-type.
    Then $Z_W(\rho_I) = N_W(W_I)$.
\end{proposition}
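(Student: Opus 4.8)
The plan is to prove the two inclusions $Z_W(\rho_I)\subseteq N_W(W_I)$ and $N_W(W_I)\subseteq Z_W(\rho_I)$ separately, using as a common bridge the fact that $V_I$ is exactly the $(-1)$-eigenspace of $\rho_I$ acting on $V$, so that both ``commuting with $\rho_I$'' and ``normalizing $W_I$'' can be translated into ``stabilizing the root subsystem $\Phi_I$''. Two standard facts about $W_I$ are used freely: that $N(w)\subseteq\Phi_I^+$ for every $w\in W_I$ (so $\rho_I$ permutes $\Phi^+\setminus\Phi_I^+$), and that $W_I=\langle s_\alpha\mid\alpha\in\Phi_I\rangle$, together with the identity $gs_\alpha g^{-1}=s_{g\alpha}$ for $g\in W$, $\alpha\in\Phi$. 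The first thing I would record is the key elementary point: since $(W_I,I)$ is of $(-1)$-type, $\rho_I$ acts as $-\mathrm{id}$ on $V_I$, hence $\rho_I\alpha=-\alpha$ for every $\alpha\in\Phi_I$; conversely, if $\alpha\in\Phi$ satisfies $\rho_I\alpha=-\alpha$ then $\alpha\in\Phi_I$, because (after replacing $\alpha$ by $-\alpha$ if needed) a root in $\Phi^+\setminus\Phi_I^+$ is sent by $\rho_I$ back into $\Phi^+$ and so cannot be negated. Thus $\Phi_I=\{\alpha\in\Phi\mid\rho_I\alpha=-\alpha\}$, and $V_I=\mathrm{span}\,\Phi_I$ is the $(-1)$-eigenspace of $\rho_I$.

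For $Z_W(\rho_I)\subseteq N_W(W_I)$: given $g\in Z_W(\rho_I)$ and $\alpha\in\Phi_I$, the computation $\rho_I(g\alpha)=g(\rho_I\alpha)=-(g\alpha)$ together with the characterization above forces $g\alpha\in\Phi_I$, so $g\Phi_I=\Phi_I$. Then $gs_\alpha g^{-1}=s_{g\alpha}\in W_I$ for all $\alpha\in\Phi_I$, whence $gW_Ig^{-1}\subseteq W_I$; applying this also to $g^{-1}\in Z_W(\rho_I)$ gives $gW_Ig^{-1}=W_I$, that is, $g\in N_W(W_I)$.

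For $N_W(W_I)\subseteq Z_W(\rho_I)$: given $g\in N_W(W_I)$ and $\alpha\in\Phi_I$, the reflection $s_{g\alpha}=gs_\alpha g^{-1}$ lies in $W_I$, so whichever of $\pm g\alpha$ is positive lies in $N(s_{g\alpha})\subseteq\Phi_I^+$; hence $g\alpha\in\Phi_I$, so again $g\Phi_I=\Phi_I$ and therefore $gV_I=V_I$. Now $g\rho_Ig^{-1}\in W_I$, and since $g$, $g^{-1}$, $\rho_I$ all stabilize $V_I$, its restriction to $V_I$ equals $(g|_{V_I})\circ(-\mathrm{id}_{V_I})\circ(g|_{V_I})^{-1}=-\mathrm{id}_{V_I}=\rho_I|_{V_I}$. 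By faithfulness of the geometric representation of $(W_I,I)$ on $V_I$, an element of $W_I$ is determined by its action on $V_I$, so $g\rho_Ig^{-1}=\rho_I$, that is, $g\in Z_W(\rho_I)$.

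The argument is short, and I expect the only place needing genuine care to be the bridging claim of the first paragraph — that the roots negated by $\rho_I$ are exactly those of $\Phi_I$ — since this is precisely what converts the group-theoretic condition of commuting with $\rho_I$ into the combinatorial condition of stabilizing $\Phi_I$; it rests on the property $N(w)\subseteq\Phi_I^+$ for $w\in W_I$, which is available here because $W_I$ is finite (so that $\rho_I$ exists in the first place).
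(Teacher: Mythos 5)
Your proof is correct. The paper does not actually prove Proposition~\ref{prop:centralizer_and_normalizer} --- it is quoted from the literature (Felder--Veselov, Pfeiffer--R\"ohrle, M\"uhlherr--Nuida) --- and your argument is a sound, self-contained version of the standard one: the characterization $\Phi_I=\{\alpha\in\Phi\mid\rho_I\cdot\alpha=-\alpha\}$, which you correctly derive from $\Phi[w]\subseteq\Phi_I^+$ for $w\in W_I$, together with faithfulness of the geometric representation of $(W_I,I)$ on $V_I$, does all the work, and both inclusions go through as written.
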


By combining Propositions \ref{prop:Richardson} and \ref{prop:centralizer_and_normalizer}, studying the structure of the centralizer $Z_W(w)$ for an element $w$ with $w^2 = 1$ is reduced to finding a pair of a subset $I \subseteq S$ and an element $u \in W$ satisfying that $(W_I,I)$ is of $(-1)$-type and $uwu^{-1} = \rho_I$ and studying the structure of the normalizer $N_W(W_I)$, as now we have $Z_W(w) = u^{-1} Z_W(\rho_I) u = u^{-1} N_W(W_I) u$.
The second part of the latter task is just an application of the result of \cite{Howlett80} when $W$ is finite and of \cite{BriHow99} for a general case.

We describe an algorithm for the first part of the latter task above.
Let $(W,S)$ be a Coxeter system and let $w \in W$.
We define
\[
    \Phi[w] := \{ \gamma \in \Phi^+ \mid w \cdot \gamma \in \Phi^- \} \enspace,
\]
\[
    J_w := \{ s \in S \mid w \cdot \alpha_s \in \Phi^- \} \enspace,
\]
\[
    K_w := \{ s \in S \mid w \cdot \alpha_s = -\alpha_s \} \enspace.
\]
Note that $K_w \subseteq J_w$ by definition, and we have $\Phi^+_{J_w} \subseteq \Phi[w]$, which is finite (as $|\Phi[w]| = \ell(w) < \infty$), therefore $|W_{J_w}| < \infty$.
Now we have the following two results:

\begin{lemma}
    \label{lem:when_involution_is_rho}
    In the situation above, if $J_w = K_w$, then $(W_{K_w},K_w)$ is of $(-1)$-type and $w = \rho_{K_w}$.
\end{lemma}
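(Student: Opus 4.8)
The plan is to set $I := K_w = J_w$ and to show that $v := w\rho_I$ sends every simple root to a positive root; this forces $v = 1$, hence $w = \rho_I$, and the $(-1)$-type assertion is then immediate. Recall from the discussion preceding the lemma that $W_I = W_{J_w}$ is finite, so the longest element $\rho_I$ is defined; being a longest element it is an involution, so $\rho_I^{-1} = \rho_I$.

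The hypothesis $J_w = K_w$ is used through two observations. First, $w\cdot\alpha_s = -\alpha_s$ for every $s\in I$, so $w$ stabilises $V_I$ and acts on it as $-\mathrm{id}$; consequently both $w$ and $w^{-1}$ interchange $\Phi^+_I$ and $\Phi^-_I$. Second, for $s\notin I$ we have $s\notin J_w$, so $w\cdot\alpha_s\in\Phi^+$, and moreover $w\cdot\alpha_s\notin V_I$: otherwise $w\cdot\alpha_s$ would be a positive root of $\Phi_I$, forcing $\alpha_s = w^{-1}\cdot(w\cdot\alpha_s)\in w^{-1}\cdot\Phi^+_I = \Phi^-_I$, which is impossible. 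Hence $w\cdot\alpha_s$ has a strictly positive coefficient on some simple root $\alpha_u$ with $u\notin I$.

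I would then check that $v\cdot\alpha_s\in\Phi^+$ for every $s\in S$, so that $J_v = \emptyset$. If $s\in I$, then $\rho_I\cdot\alpha_s\in\Phi^-_I$, say $\rho_I\cdot\alpha_s = -\delta$ with $\delta\in\Phi^+_I$, whence $v\cdot\alpha_s = w\cdot(-\delta) = \delta\in\Phi^+$ by the first observation. If $s\notin I$, writing $\rho_I\cdot\alpha_s = \alpha_s + x$ with $x\in V_I$ (possible since $W_I$ acts trivially on $V/V_I$), one gets $v\cdot\alpha_s = w\cdot\alpha_s + w\cdot x = w\cdot\alpha_s - x$; subtracting $x\in V_I$ leaves the coefficient of $w\cdot\alpha_s$ on $\alpha_u$ ($u\notin I$) unchanged, so by the second observation $v\cdot\alpha_s$ still has a strictly positive coefficient and hence lies in $\Phi^+$. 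Since $J_v = \emptyset$, passing to a reduced word for $v$ and cancelling its last letter shows $v = 1$; thus $w = \rho_I^{-1} = \rho_I$. Finally $(W_I,I)$ is of $(-1)$-type because $W_I$ is finite and $\rho_I = w$ sends every $\alpha_s$ with $s\in I$ to $-\alpha_s$, by the definition of $K_w$.

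The step needing the most care is the case $s\notin I$ above: one must be certain that $w\cdot\alpha_s$ is not merely positive but genuinely protrudes from $V_I$, and that left multiplication by $\rho_I$ disturbs only the $V_I$-component of $\alpha_s$. This is exactly where the full strength of $J_w = K_w$ is used, as opposed to the automatic inclusion $K_w\subseteq J_w$. The other ingredients — that the longest element of a finite parabolic reverses that parabolic's own positive system, that $W_I$ acts trivially on $V/V_I$, and the dichotomy $\Phi = \Phi^+\sqcup\Phi^-$ — are standard and require no new argument.
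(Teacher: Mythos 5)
Your proof is correct, but it takes a different route from the paper's. You form the product $v = w\rho_{K_w}$ and show $v\cdot\alpha_s\in\Phi^+$ for every $s\in S$, forcing $v=1$; the paper instead proves the inclusion $\Phi[w]\subseteq\Phi_{K_w}$ (by the same kind of coefficient argument you use for $s\notin K_w$: if some $\gamma\in\Phi^+\setminus\Phi_{K_w}$ were inverted by $w$, the positive roots $w\cdot\alpha_s$ for $s$ in the support of $\gamma$ outside $K_w$ would have to lie in $\Phi_{K_w}$, contradicting that $w$ acts as $-\mathrm{id}$ there), then invokes the coset decomposition $w=w^{K_w}w_{K_w}$ to conclude $w=w_{K_w}\in W_{K_w}$, and finally identifies $w$ with $\rho_{K_w}$ because it negates every simple root of $K_w$. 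The two arguments rest on the same pair of observations --- $w$ restricts to $-\mathrm{id}$ on $V_{K_w}$, and for $s\notin K_w=J_w$ the root $w\cdot\alpha_s$ is positive with support meeting $S\setminus K_w$ --- but they package them differently. Your version needs the standard properties of $\rho_I$ up front (that it is an involution reversing $\Phi_I^+$, and that $W_I$ acts trivially on $V/V_I$) and gets the identity $w=\rho_{K_w}$ by direct cancellation; the paper's version needs only the theory of minimal coset representatives and obtains the fact that $w$ is the longest element of $W_{K_w}$ as a byproduct, without presupposing how $\rho_{K_w}$ acts. Both are complete; the one point in your write-up worth making fully explicit is that $v\cdot\alpha_s$, being a root with one strictly positive coefficient on a simple root outside $K_w$, must be positive because every root has coefficients of a single sign --- you use this dichotomy correctly but only name it at the end.
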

\begin{proof}
    First we show that the claim will follow once it is shown that $\Phi[w] \subseteq \Phi_{K_w}$.
    Indeed, $w$ admits a decomposition $w = w^{K_w} w_{K_w}$ into an element $w_{K_w} \in W_{K_w}$ and the shortest representative $w^{K_w}$ of the coset $w W_{K_w}$, and it satisfies that $\Phi[w^{K_w}] \cap \Phi_{K_w} = \emptyset$ and hence $\Phi[w_{K_w}] = \Phi[w] \cap \Phi_{K_W}$.
    Now if $\Phi[w] \subseteq \Phi_{K_w}$, then we have $\Phi[w_{K_w}] = \Phi[w]$ which implies that $\Phi[w_{K_w}w^{-1}] = \emptyset$ and hence $w_{K_w}w^{-1} = 1$ and $w_{K_w} = w$.
    Therefore we have $w_{K_w} \cdot \alpha_s = w \cdot \alpha_s = -\alpha_s$ for any $s \in K_w$ and hence $(W_{K_w},K_w)$ is of $(-1)$-type and $w = w_{K_w} = \rho_{K_w}$, as desired.

    Our remaining task is to show that $\Phi[w] \subseteq \Phi_{K_w}$.
    Assume for the contrary that $\gamma \in \Phi^+ \setminus \Phi_{K_w}$ and $w \cdot \gamma \in \Phi^-$.
    Write $\gamma = \sum_{s \in S} c_s \alpha_s$ with $c_s \geq 0$.
    Then we have
    \[
        w \cdot \gamma = \sum_{s \in K_w} (-c_s) \alpha_s + \sum_{s \in S \setminus K_w\,;\,c_s > 0} c_s w \cdot \alpha_s \in \Phi^- \enspace.
    \]
    Now the second sum in the right-hand side has at least one term (by the assumption $\gamma \not\in \Phi_{K_w}$) and each of such terms satisfies that $w \cdot \alpha_s \in \Phi^+$ (as $K_w = J_w$).
    This implies that, for each of such terms, we must have $w \cdot \alpha_s \in \Phi_{K_w}$ and hence $\alpha_s = w^{-1} \cdot (w \cdot \alpha_s) = -w \cdot \alpha_s \in \Phi^-$ by the definition of $K_w$.
    This is a contradiction.
    Hence we have $\Phi[w] \subseteq \Phi_{K_w}$, concluding the proof.
\end{proof}

\begin{lemma}
    \label{lem:shorten_involution}
    In the situation above, if $w^2 = 1$ and $s \in J_w \setminus K_w$, then $\ell(sws) < \ell(w)$.
\end{lemma}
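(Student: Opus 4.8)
The plan is first to establish the upper bound $\ell(sws)\le\ell(w)$ from the hypothesis $s\in J_w$ alone, and then to show that the borderline case $\ell(sws)=\ell(w)$ cannot occur, using $w^2=1$ together with $s\notin K_w$.

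First I would note that $s\in J_w$ means $w\cdot\alpha_s\in\Phi^-$, so by the standard length criterion $\ell(ws)=\ell(w)-1$. Since left multiplication by a simple reflection changes the length by exactly one, $\ell(sws)=\ell\bigl(s\cdot(ws)\bigr)\in\{\ell(ws)-1,\ell(ws)+1\}$, and in particular $\ell(sws)\le\ell(ws)+1=\ell(w)$. Thus it remains only to rule out the equality $\ell(sws)=\ell(w)$.

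So suppose $\ell(sws)=\ell(w)=\ell(ws)+1$, i.e.\ left multiplication by $s$ strictly increases the length of $ws$. By the standard criterion this means $(ws)^{-1}\cdot\alpha_s\in\Phi^+$. This is the point at which $w^2=1$ enters: $(ws)^{-1}=s^{-1}w^{-1}=sw$, so we obtain $s\cdot(w\cdot\alpha_s)\in\Phi^+$, whereas $w\cdot\alpha_s\in\Phi^-$ because $s\in J_w$. Hence $w\cdot\alpha_s$ is a negative root that the simple reflection $s$ sends into $\Phi^+$; but $s$ permutes $\Phi^+\setminus\{\alpha_s\}$, so the only negative root it maps into $\Phi^+$ is $-\alpha_s$. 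Therefore $w\cdot\alpha_s=-\alpha_s$, i.e.\ $s\in K_w$, contradicting the hypothesis $s\in J_w\setminus K_w$. This contradiction yields $\ell(sws)<\ell(w)$.

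I do not anticipate a serious difficulty: the argument is just two applications of the length/root dictionary together with the observation that an involution equals its own inverse. The only delicate point is to invoke $w^{-1}=w$ at precisely the moment that converts the condition $(ws)^{-1}\cdot\alpha_s\in\Phi^+$ into a statement about $w\cdot\alpha_s$, the quantity controlled by $J_w$ and $K_w$. As an alternative packaging one could instead quote the classical fact that $\ell(sw)<\ell(w)$ and $\ell(ws)<\ell(w)$ together force either $\ell(sws)<\ell(w)$ or $sws=w$, and then exclude the latter: $sws=w$ gives $wsw^{-1}=s$, so the reflection in $w\cdot\alpha_s$ coincides with $s$, hence $w\cdot\alpha_s=\pm\alpha_s$, and since $w\cdot\alpha_s\in\Phi^-$ we again land on $s\in K_w$.
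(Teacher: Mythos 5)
Your proof is correct and follows essentially the same route as the paper's: both reduce to the length/root dictionary, use $w^{-1}=w$ to rewrite $(ws)^{-1}$ as $sw$, and conclude from $w\cdot\alpha_s\in\Phi^-$ and $sw\cdot\alpha_s\in\Phi^+$ that $w\cdot\alpha_s=-\alpha_s$, contradicting $s\notin K_w$. The only cosmetic difference is that you first isolate the bound $\ell(sws)\le\ell(w)$ and then exclude equality, whereas the paper argues directly by contradiction from $\ell(sws)\ge\ell(w)$.
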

\begin{proof}
    Assume for the contrary that $\ell(sws) \geq \ell(w)$.
    The assumption $s \in J_w$ implies that $\ell(ws) < \ell(w)$, therefore $\ell(sws) > \ell(ws) = \ell((ws)^{-1}) = \ell(sw)$ as $w^2 = 1$.
    These inequalities for lengths imply that $w \cdot \alpha_s \in \Phi^-$ and $sw \cdot \alpha_s \in \Phi^+$, therefore $w \cdot \alpha_s = -\alpha_s$.
    This contradicts the assumption $s \not\in K_w$.
    Hence the claim holds.
\end{proof}

By combining Lemmas \ref{lem:when_involution_is_rho} and \ref{lem:shorten_involution}, we can find, for any given element $w \in W$ with $w^2 = 1$, a subset $I \subseteq S$ and an element $u \in W$ satisfying that $(W_I,I)$ is of $(-1)$-type and $uwu^{-1} = \rho_I$.
Indeed, if $J_w = K_w$ (which also holds in the base case $w = 1$), then $I := K_w$ and $u := 1$ satisfy the condition by Lemma \ref{lem:when_involution_is_rho}.
Otherwise, we have $J_w \setminus K_w \neq \emptyset$, and any of $s \in J_w \setminus K_w$ satisfies that $\ell(sws) < \ell(w)$ by Lemma \ref{lem:shorten_involution}.
Now the recursive procedure for the shorter involution $sws$ yields such objects $I' \subseteq S$ and $u' \in W$ for $sws$; then $I := I'$ and $u := u's$ satisfy the condition for $w$.

\end{document}